\newcommand{\normal}{\triangleleft}
\newcommand{\W}{{\mathfrak W}}
\newcommand{\V}{{\mathfrak V}}
\newcommand{\Ni}{{\mathfrak N}}
\newcommand{\B}{{\mathfrak B}}
\newcommand{\X}{{\mathfrak X}}
\newcommand{\Y}{{\mathfrak Y}}
\renewcommand{\L}{{\mathfrak L}}
\newcommand{\U}{{\mathfrak U}}
\newcommand{\A}{{\mathfrak A}}
\newcommand{\Z}{{\mathbb Z}}
\newcommand{\F}{{\mathbb F}}
\newcommand{\1}{\{1\}}
\newcommand{\var}[1]{\mathrm{var}\left( #1 \right)}
\newcommand{\varr}[1]{\mathrm{var}( #1 )}
\newcommand{\Wrr}{\,\mathrm{wr}\,}
\theoremstyle{definition}
\theoremstyle{plain}
\newtheorem{Lemma}{\sc Lemma}  
\newtheorem{Theorem}{\sc Theorem}  
\newtheorem{Corollary}{\sc Corollary}  
\theoremstyle{remark}
\newtheorem{Example}{\sc Example} 
\begin{document}

%%%%%%%%%%%%%%%%%%%%%%%%%%%%%%%%%%%%%%%%%%%%%%
%%%%%%%%%%%%%%%%%%%%%%%%%%%%%%%%%%%%%%%%%%%%%%
%%%%%%%%%%%%%%%%%%%%%%%%%%%%%%%%%%%%%%%%%%%%%%
$\phantom{Line}$ 

\subjclass{20E22, 20E10, 20K01, 20K25, 20D15.}
\keywords{Wreath products, varieties of groups, finite groups, products of varieties of groups, abelian groups, nilpotent groups, critical groups.}

\title[The Criterion of Shmel'kin and Varieties]{\sc \large The Criterion of Shmel'kin and Varieties Generated by Wreath Products of Finite Groups}

\author{Vahagn H.~Mikaelian}

\thanks{Partial results of this research were presented to the  {\it ``Mal'tsev Meeting''} International Conference, Novosibirsk, Russia, November 10--13, 2014. 
\,
The author was supported in part by joint grant 15RF-054 of RFBR and SCS MES RA (in frames of joint research projects SCS  and RFBR), and by 13-1A246 grant of SCS MES RA}

% \address{CSE, American University of Armenia (affiliate of the University of California L.A.), 40 Marshal Baghramyan Ave., Yerevan 0019, Armenia.}

% \address{Informatics and Appl.~Mathhematics Department, Yerevan State University, Yerevan 0025, Armenia.  \vskip2mm }

\email{v.mikaelian@gmail.com}

\date{\today}

% \dedicatory{ To Professor Alfred L.~Shmel'kin on his birthday}

\begin{abstract}
We present a general criterion under which the equality $\var{A \Wrr B} = \var{A} \var{B}$ holds for finite groups $A$ and $B$. This generalizes known results in this direction in the literature, and continues our previous research on varieties generated by wreath products of abelian groups. The classification is based on technics developed by A.L.~Shmel'kin, R.~Burns et al.~to study the critical groups in nilpotent-by-abelian varieties. 
\end{abstract}

\maketitle

% {\footnotesize \parskip0.5mm \tableofcontents }

%%%%%%%%%%%%%%%%%%%%%%%%%%%%%%%%%%%%%%%%%%%%%%
%%%%%%%%%%%%%%%%%%%%%%%%%%%%%%%%%%%%%%%%%%%%%%
%%%%%%%%%%%%%%%%%%%%%%%%%%%%%%%%%%%%%%%%%%%%%%
\section{Introduction}
\label{Introduction}

\noindent
Our aim is to present a criterion classifying all the cases, when for the finite groups $A$ and $B$ their standard wreath product $A \Wrr B$ generates the product $\var{A} \var{B}$ of varieties generated by $A$ and $B$ respectively. Under wreath products we mean standard direct wreath products, which in this case coincide with standard Cartesian wreath products, since the groups are finite (and the criterion holds for both Cartesian and direct wreath products). We prove:

\begin{Theorem}
\label{Theorem wr finite}%%%%%%%%%%%%%%%%%%%%%%%%%%%%%%%%%%%%%%%%%%%%%
For finite non-trivial groups $A$ and $B$ the equality 
\begin{equation}
\label{EQUATION_main}    
\var{A \Wrr B} = \var{A} \var{B}
\end{equation}
holds if and only if: 
\begin{enumerate}
  \item[a)]  \vskip-1mm the exponents of group $A$ and $B$ are coprime;
  \item[b)] $A$ is a nilpotent group, $B$ is an abelian group;  
  \item[c)] $B$ contains a subgroup isomorphic to the direct product $C_n^c$ of $c$ copies of cycle $C_n$, where $c$ is the nilpotency class of $A$, and $n$ is the exponent of $B$.
\end{enumerate}
\end{Theorem}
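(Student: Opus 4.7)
The plan is to reduce the problem to critical groups via the Oates--Powell theorem and then apply the Shmel'kin--Burns description of critical groups in nilpotent-by-abelian products of varieties. Since $A$ and $B$ are finite, $\var{A}\var{B}$ is a Cross variety, hence generated by its finitely many critical groups; similarly $\var{A \Wrr B}$ is generated by the critical sections of $A \Wrr B$. The inclusion $\var{A \Wrr B} \subseteq \var{A}\var{B}$ is immediate, since the base subgroup of $A \Wrr B$ is a direct power of $A$ (hence lies in $\var{A}$), is normal, and has quotient $B \in \var{B}$. So the real content is to match the critical groups on the two sides.

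For sufficiency, assume (a)--(c). Condition (b) places $\var{A}\var{B}$ inside the nilpotent-by-abelian class, so the Shmel'kin--Burns theorem describes every critical group $G$ of $\var{A}\var{B}$ as a monolithic extension of a nilpotent $p$-group of class at most $c$ by an abelian $q$-group, with $p \ne q$ forced by condition (a). To embed such a $G$ as a section of some power $(A \Wrr B)^m$, I would first use the coprimality in (a) to decompose the base group of $A \Wrr B$ into its $\Z B$-primary components, which are controlled by the primary decomposition of $A$. Condition (c) is then used to locate inside $B$ a copy of $C_n^c$ acting on these components in exactly the way required to realize the top action of $G$; the critical group $G$ is then recovered from a suitable permutation-module construction inside $(A \Wrr B)^m$.

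For necessity I would argue on three fronts. Condition (b) is necessary because the derived length and nilpotency class of sections of $A \Wrr B$ are bounded by invariants of $A$ and $B$ respectively, so if $A$ is not nilpotent or $B$ not abelian, then $\var{A}\var{B}$ already strictly exceeds $\var{A \Wrr B}$. Condition (a) is necessary because a prime $p$ dividing both $\exp A$ and $\exp B$ produces a critical $p$-group in $\var{A}\var{B}$ whose structure is incompatible with the Sylow decomposition of $A \Wrr B$. The main obstacle, and the most delicate step of the whole argument, is the necessity of (c): if $B$ does not contain $C_n^c$, one must construct a specific critical group of $\var{A}\var{B}$ whose top requires $c$ genuinely independent cyclic factors of order $n$, and which consequently cannot occur as a section of any power of $A \Wrr B$. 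The Shmel'kin--Burns classification supplies exactly the required lower-bound example, and verifying that this example falls outside $\var{A \Wrr B}$ is what makes condition (c) sharp.
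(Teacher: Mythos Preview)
Your sufficiency sketch is on the right track and essentially matches the paper: reduce to critical groups of the locally finite variety $\var{A}\var{B}$, use that each non-abelian critical group is an extension of some $L\in\var{A}$ by an at most $c$-generator abelian group $T\in\var{B}$ (this is the Shmel'kin--Burns structural fact you allude to), and then observe that condition~(c) forces $T$ to embed in $B$. The paper then finishes cleanly via Kaloujnine--Krasner (so $K\le L\Wrr T$) together with the standard lemmas that $L\Wrr T\in\var{A\Wrr B}$ whenever $L\in{\sf QSC}\{A\}$ and $T\le B$; your ``permutation-module construction inside $(A\Wrr B)^m$'' is vaguer but presumably aims at the same thing.

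Your necessity argument, however, has real gaps. For (a) and (b) your proposed reasons are not correct: bounding derived length or nilpotency class of sections of $A\Wrr B$ does not by itself separate $\var{A\Wrr B}$ from $\var{A}\var{B}$. The right argument is a one-line appeal to Shmel'kin's theorem: $A\Wrr B$ is finite, so if $\var{A\Wrr B}=\var{A}\var{B}$ then the product variety is generated by a finite group, and Shmel'kin's criterion then forces (a) and (b).

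The serious gap is (c). Saying ``the Shmel'kin--Burns classification supplies exactly the required lower-bound example'' is not a proof; you have neither named the witness nor shown it lies outside $\var{A\Wrr B}$. The paper's witness is the $\L$-verbal wreath product $W(A,p)=F_c(\L)\Wrr_{\L}C_p^c$ for $\L=\var{A}$ and a suitably chosen prime $p\mid n$ with $C_p^c\not\le B$. The heart of the argument is a linear-algebra lemma (an analogue of Burns's Lemma~5.4): any family of normal subgroups of $W(A,p)$ not contained in the base subgroup has nontrivial intersection, proved by counting equations versus unknowns in an $\F_q$-space built from basic commutators of weight $c$. This lemma is exactly what forces, in any embedding of $W(A,p)$ into a finite direct power of $A\Wrr B$, one kernel to lie in the base subgroup, so that the corresponding image in $A\Wrr B$ contains a copy of $C_p^c$; a Sylow argument then gives $C_p^c\le B$, contradicting the choice of $p$. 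None of this machinery is visible in your outline, and without it the necessity of (c) is unproved.
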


In theory of varieties of groups the study of equality \eqref{EQUATION_main} for (not necessarily finite) groups is motivated by the importance of wreath products as tools to study the product varieties of groups. 
For varieties $\U$ and $\V$ their product $\U \V$ consists of all possible extensions of all groups $A \in \U$ by all groups $B \in \V$. 
Take $A$ and $B$ to be some fixed groups generating the varieties $\U$ and $\V$ respectively. Then, if 
$\var{A \Wrr B}=\U \V$ holds, 
 we can restrict ourselves to consideration of $\var{A \Wrr B}$, which is easier to study rather than to explore all the extensions in $\U \V$.
In literature there are very many applications of the above approach: one may check Hanna Neumann's monograph~\cite{HannaNeumann} for examples and for references to other articles.

One of the first results in that direction was proved by G.~Higman (Lemma 4.5 and Example 4.9 in~\cite{Some_remarks_on_varieties}): the equality 
$\var{C_p \Wrr C_n}= \var{C_p} \var{C_n}= \A_p \A_n$ 
holds for any finite cycles $C_p$ and $C_n$ provided that $p$ is a prime not dividing $n$  (as usual $\A_n$ denotes the variety of all abelian groups of exponent dividing $n$). 
C.H.~Houghton generalized this for the case of arbitrary finite cycles $A=C_m$ and $B=C_n$. Namely, 
$\var{C_m \Wrr C_n}= \var{C_m}  \var{C_n} = \A_m \A_n$ 
holds if and only if $m$ and $n$ are coprime (this is mentioned in~\cite{Burns65}, \cite{HannaNeumann}, etc.).

In articles~\cite{AwrB_paper}--\cite{wreath products algebra i logika} we presented full classification of all cases when  \eqref{EQUATION_main} holds for arbitrary abelian groups. In~\cite{Metabelien} we gave a classification of all cases when the analog of \eqref{EQUATION_main} holds for wreath products of sets of abelian groups.
After the classification was found for all abelian groups, it is natural to widen the class of groups, and the first class to consider are {\it finite groups}. In the listed papers we already had suggested some special cases such as examples 8.5, 8.6 and 8.7 in~\cite{Metabelien}, Proposition 2 and Example 2 in~\cite{wreath products algebra i logika} in which   the analog of \eqref{EQUATION_main} holds or does not hold for some specific non-abelian finite groups.

An intriguing fact additionally motivating this study is the well known theorem of A.L. Shmel'\-kin who proved that the product $\U \V$ of non-trivial varieties $\U$ and $\V$ can be generated by a finite group if and only if 
the exponents of $\U$ and $\V$ are non-zero and coprime, $\U$ is a nilpotent variety, and $\V$ is an abelian variety~\cite[Theorem 6.3]{ShmelkinOnCrossVarieties}. Since for finite groups $A$ and $B$ the wreath product $A \Wrr B$ also is finite, we already have necessity of the conditions (a) and (b) in Theorem~\ref{EQUATION_main}. That is, we just have to distinguish those pairs of nilpotent groups $A$ and of abelian groups $B$, which satisfy the criterion of Shmel'kin, and for which $A \Wrr B$ generates $\var{A} \var{B}$. We have intentionally formulated Theorem~\ref{EQUATION_main} so that it is as alike to Theorem 6.3 in~\cite{ShmelkinOnCrossVarieties} as possible.

Another related result is the important theorem of R.~Burns on the base rank of the variety $\Ni_{c,m}\A_n$, where  $\Ni_{c,m} = \Ni_c \cap \B_m$ is the variety of all nilpotent groups of class at most $c$ and of exponents dividing $m$. Recall that the base rank $l(\V)$ of a variety $\V$ is defined to be the minimal (finite or countable) rank $l$ for which $F_l(\V)$ generates $\V$. R.~Burns has proved that $l=l(\Ni_{c,m}\A_n)=c$, whenever $m$ and $n$ are coprime~\cite{Burns65}.
Using technics with critical groups in~\cite[Section 3]{Burns65} one could easily find cases when the equality \eqref{EQUATION_main} holds or does not hold, say, for $A=F_2(\Ni_{2,p})$ and $B = C_q^k$, where prime numbers $p$ and $q$ are chosen so that $q$ divides $p-1$. Applying methods from our previous research \cite{AwrB_paper}--\cite{wreath products algebra i logika} we generalize this in Theorem~\ref{Theorem wr finite} for {\it arbitrary} finite $A$ and $B$.

Theorem~\ref{Theorem wr finite} has especially simple shape, when $A$ is abelian, that is, when $c = 1$. Then the condition (c) of Theorem~\ref{Theorem wr finite} means that $B$  contains an element of order $n = \exp B$. Since this holds for any finite abelain group, the only point we actually have to check in  Theorem~\ref{Theorem wr finite} is condition (a):

\begin{Corollary}[Theorem 4.6 in~\cite{Metabelien}]
\label{Corollary finite}%%%%%%%%%%%%%%%%%%%%%%%%%%%%%%%%%%%%%%%%%%%%%
For arbitrary finite abelian groups $A$ and $B$ of exponents $m$ and $n$ respectively the equality
$\var{A \Wrr B} =  \A_m \A_n$ holds if and only if $m$ and $n$ are coprime.
\end{Corollary}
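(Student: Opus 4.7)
The plan is to derive this corollary as an immediate specialization of Theorem~\ref{Theorem wr finite}. First I would observe that when $A$ is abelian it is nilpotent of class $c=1$; together with $B$ being abelian, this makes condition (b) of Theorem~\ref{Theorem wr finite} automatic. Second, with $c=1$ condition (c) reduces to asking that $B$ contain a subgroup isomorphic to $C_n$ for $n=\exp B$, and this is true for every finite abelian group because its exponent is attained as the order of some element (one can see this by taking a cyclic decomposition and using the Chinese Remainder Theorem across primes to assemble an element whose order equals the least common multiple of the component orders).

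With (b) and (c) satisfied for free, the three-part criterion of Theorem~\ref{Theorem wr finite} collapses to condition (a) alone, namely that $m$ and $n$ be coprime. It then remains to match the right-hand side of the equality. I would check that a finite abelian group $A$ of exponent $m$ generates the variety $\A_m$: the inclusion $\var{A}\subseteq\A_m$ is immediate from the defining laws $x^m = 1$ and $[x,y] = 1$, while $\A_m=\var{C_m}\subseteq\var{A}$ follows from the fact, noted above, that $A$ contains a copy of $C_m$. The analogous statement for $B$ gives $\var{A}\var{B}=\A_m \A_n$, so the equality \eqref{EQUATION_main} becomes $\var{A \Wrr B}=\A_m\A_n$, which is exactly the equality in the statement of the corollary.

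I do not anticipate a genuine obstacle here: all the substantive content has been packaged into Theorem~\ref{Theorem wr finite}, and once the elementary observation about the exponent of a finite abelian group is made, conditions (b) and (c) vanish and $\var{A}$, $\var{B}$ are identified with the corresponding Burnside varieties $\A_m$, $\A_n$. Both implications of the corollary inherit directly from the corresponding implications of Theorem~\ref{Theorem wr finite}; in particular the ``only if'' direction requires no separate argument.
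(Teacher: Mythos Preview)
Your proposal is correct and matches the paper's own derivation essentially verbatim: the paper observes in the paragraph immediately preceding the corollary that for abelian $A$ one has $c=1$, so condition (c) of Theorem~\ref{Theorem wr finite} reduces to $B$ containing an element of order $\exp B$, which always holds, leaving only condition (a). Your added verification that $\var{A}=\A_m$ and $\var{B}=\A_n$ is a detail the paper leaves implicit but is exactly what is needed to match the right-hand side.
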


As we had mentioned in~\cite{Metabelien} the statement above seems to be a fact known in mathematical folklore. When $A$ and $B$ are cyclic, we get Theorem of Houghton (from which the previous corollary also may be deduced):

\begin{Corollary}[Theorem of Houghton]
\label{Corollary Houghton}%%%%%%%%%%%%%%%%%%%%%%%%%%%%%%%%%%%%%%%%%%%%%
For arbitrary cyclic groups $C_m$ and $C_n$ of orders $m$ and $n$ respectively the equality
$\var{C_m \Wrr C_n} =  \A_m \A_n$ holds if and only if $m$ and $n$ are coprime.
\end{Corollary}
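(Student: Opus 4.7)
The plan is to derive this Corollary as an immediate specialization of Corollary~\ref{Corollary finite}. A cyclic group $C_m$ is a finite abelian group whose exponent is exactly its order $m$, and likewise for $C_n$. Thus the hypotheses of Corollary~\ref{Corollary finite} are satisfied with $A=C_m$ and $B=C_n$, and the exponents occurring in its statement are the orders $m$ and $n$. Substituting into the conclusion of Corollary~\ref{Corollary finite} yields precisely the desired equivalence: $\var{C_m \Wrr C_n} = \A_m \A_n$ if and only if $m$ and $n$ are coprime.

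The only point that deserves a line of comment (and is standard) is the identification of the variety generated by a cyclic group. First, $C_m \in \A_m$ since $C_m$ is abelian of exponent $m$, so $\var{C_m} \sub \A_m$. Conversely, every finitely generated group in $\A_m$ is a quotient of a finite direct power $C_m^k$, since $C_m^k$ is the free group of rank $k$ in $\A_m$; hence $\A_m \sub \var{C_m}$. This makes the right-hand side of Corollary~\ref{Corollary finite} agree with $\var{C_m}\var{C_n}$ as required in Houghton's formulation.

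There is no real obstacle to overcome here: once Corollary~\ref{Corollary finite} (and hence the main Theorem~\ref{Theorem wr finite}) has been established, this Corollary is essentially a restatement under the additional assumption that $A$ and $B$ are cyclic. The only minor subtlety is being careful that ``order'' and ``exponent'' coincide for cyclic groups, so that the statement of Corollary~\ref{Corollary finite} transfers verbatim to the cyclic case without any adjustment of parameters.
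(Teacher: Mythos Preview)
Your proposal is correct and matches the paper's own approach: the paper explicitly presents Houghton's theorem as the specialization of Corollary~\ref{Corollary finite} to cyclic $A$ and $B$, noting (as you do) that for a cyclic group the order equals the exponent. Your extra remark that $\var{C_m}=\A_m$ is standard and consistent with the paper's conventions.
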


%%%%%%%%%%%%%%%%%%%%%%%%%%%%%%%%%%%%%%%%%%%%%%%%%%%%%%%%%%%%%%%%%%%%%%
%%%%%%%%%%%%%%%%%%%%%%%%%%%%%%%%%%%%%%%%%%%%%%%%%%%%%%%%%%%%%%%%%%%%%%
%%%%%%%%%%%%%%%%%%%%%%%%%%%%%%%%%%%%%%%%%%%%%%%%%%%%%%%%%%%%%%%%%%%%%%
\section{Sufficiency of the condition of Theorem~\ref{Theorem wr finite}}  
\label{Sufficiency of the condition}%%%%%%%%%%%%%%%%%%%%%%%%%%%%%%%%%%%%%%%%%%%%%%%%%%%%%%%%%%%

Following the conventional notation in theory of varieties of groups for a given class $\X$ of groups we respectively denote by  ${\sf Q}\X$, ${\sf S}\X$ and  ${\sf C}\X$ the classes of all homomorphic images, subgroups and cartesian products of groups of $\X$. By Birkhoff's Theorem~\cite{BirkhoffQSC,HannaNeumann} for any class $\X$ of groups the variety $\varr{\X}$ generated by it can be obtained by these three operations: $\varr{\X}={\sf QSC}\,\X$.

For the given classes of groups $\X$ and $\Y$ denote $\X \Wrr \Y = \{ X\Wrr Y \,|\, X\in \X, Y\in \Y\}$. 
We need two lemmas combining a few statements, which either restate some known facts in the literature or are proved by us earlier 
(see Proposition 22.11 and Proposition 22.13 in \cite{HannaNeumann},
Lemma 1.1 and Lemma 1.2 in \cite{AwrB_paper}
and also \cite{ShmelkinOnCrossVarieties} and \cite{BrumbergOnWreathProducts}). We omit the proofs, which can be found in  \cite{AwrB_paper}.

\begin{Lemma}
\label{X*WrY_belongs_var}%%%%%%%%%%%%%%%%%%%%%%%%%%%%%%%%%%%%%%%%%%%%%
For arbitrary classs $\X$ and $\Y$ of groups and for arbitrary groups $X^*$ and $Y$, where either $X^*\in 
{\sf Q}\X$, or $X^*\in {\sf S}\X$, or $X^*\in {\sf C}\X$, and where $Y\in \Y$, the group $X^* \Wrr
Y$  belongs to the variety $\var{\X \Wrr \Y}$.
\end{Lemma}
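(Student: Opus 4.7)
The plan is to treat the three cases $X^* \in {\sf Q}\X$, $X^* \in {\sf S}\X$, $X^* \in {\sf C}\X$ separately, producing in each case a direct comparison between $X^* \Wrr Y$ and groups of the form $X \Wrr Y$ with $X \in \X$. Since $\var{\X \Wrr \Y} = {\sf QSC}(\X \Wrr \Y)$ by Birkhoff's theorem, it will suffice to show that $X^* \Wrr Y$ is obtained from groups in $\X \Wrr \Y$ by a single application of one of ${\sf Q}$, ${\sf S}$, ${\sf C}$.

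Case ${\sf Q}$: if $\varphi\colon X \twoheadrightarrow X^*$ is a surjection with $X \in \X$, then $\varphi$ extends coordinate-wise to a surjection of the base groups $X^Y \twoheadrightarrow (X^*)^Y$ which is equivariant with respect to the $Y$-action, and so lifts to a surjective homomorphism $X \Wrr Y \twoheadrightarrow X^* \Wrr Y$. Thus $X^* \Wrr Y \in {\sf Q}(\X \Wrr \Y) \subseteq \var{\X \Wrr \Y}$.

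Case ${\sf S}$: if $X^* \leq X$ with $X \in \X$, then the functions $Y \to X^*$ form a $Y$-invariant subgroup of the functions $Y \to X$, and together with $Y$ this produces a natural inclusion $X^* \Wrr Y \hookrightarrow X \Wrr Y$, giving $X^* \Wrr Y \in {\sf S}(\X \Wrr \Y)$.

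Case ${\sf C}$: if $X^* = \prod_{i\in I} X_i$ with each $X_i \in \X$, then I define the diagonal map
\[
\Phi\colon X^* \Wrr Y \longrightarrow \prod_{i \in I} (X_i \Wrr Y), \qquad (f, y) \longmapsto \bigl( (\pi_i \circ f,\, y) \bigr)_{i \in I},
\]
where $\pi_i\colon X^* \to X_i$ are the projections. A short check shows that $\Phi$ respects the wreath product multiplication (because the same $y$-shift is applied uniformly in every coordinate) and is injective, so $X^* \Wrr Y \in {\sf SC}(\X \Wrr \Y) \subseteq \var{\X \Wrr \Y}$. The delicate point, and the one on which the argument really rests, is to verify that $\Phi$ is a homomorphism: multiplication in $X^* \Wrr Y$ uses the single $Y$-action on $(X^*)^Y \cong \prod_i X_i^Y$, whereas the product on the right uses $|I|$ parallel copies of the $Y$-action, and one must observe that these agree precisely because all coordinates carry the same $y \in Y$. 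With that verification, all three cases combine to give the claim.
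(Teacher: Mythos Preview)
Your argument is correct in all three cases. The paper itself does \emph{not} prove this lemma: it records that the statement is a recombination of facts from \cite{HannaNeumann}, \cite{ShmelkinOnCrossVarieties}, \cite{BrumbergOnWreathProducts} and refers the reader to \cite{AwrB_paper} for the details, so there is no in-text proof to compare against. What you wrote is essentially the standard verification one expects in that reference: the ${\sf Q}$ and ${\sf S}$ cases are the elementary functoriality of the base-group construction, and in the ${\sf C}$ case your diagonal embedding $(f,y)\mapsto((\pi_i\circ f,\,y))_{i\in I}$ is exactly the usual way to realise $\big(\prod_i X_i\big)\Wrr Y$ inside $\prod_i(X_i\Wrr Y)$. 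Your remark that the homomorphism check works because the same $y$ is used in every coordinate is the right point to flag. One small note: since the paper's $\Wrr$ is the \emph{direct} wreath product, in the ${\sf C}$ case you are implicitly using that if $f\colon Y\to\prod_i X_i$ has finite support then so does each $\pi_i\circ f$; this is immediate, but worth saying once so the map really lands in the direct (not Cartesian) wreath products on the right.
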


\begin{Lemma}
\label{XWrY*_belongs_var}%%%%%%%%%%%%%%%%%%%%%%%%%%%%%%%%%%%%%%%%%%%%%
For arbitrary classs $\X$ and $\Y$ of groups and for arbitrary groups $X$ and $Y^*$, where $X\in\X$ and 
where $Y^*\in {\sf S}\Y$, the group $X \Wrr Y^*$  belongs to the variety $\var{\X \Wrr \Y}$.
Moreover, if  $\X$ is a class of abelian groups, then for each  $Y^*\in {\sf Q}\Y$ the group $X
\Wrr Y^*$  also belongs to $\var{\X \Wrr \Y}$.
\end{Lemma}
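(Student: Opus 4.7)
The plan is to verify each assertion by exhibiting $X \Wrr Y^*$ via one of the operations ${\sf S}$ or ${\sf Q}$ applied directly to a concrete wreath product in $\X \Wrr \Y$, then invoking Birkhoff's theorem.

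For the first assertion, given $Y^* \le Y \in \Y$, I would embed $X \Wrr Y^*$ as a subgroup of $X \Wrr Y$. The base group $X^{(Y^*)}$ of $X \Wrr Y^*$ identifies with the subset of $X^{(Y)}$ consisting of functions $f: Y \to X$ supported inside $Y^*$ (extended by the identity off $Y^*$). Since $Y^*$ is closed under multiplication, this subset is a subgroup of $X^{(Y)}$ stable under the translation action of $Y^* \le Y$. Combining this embedding of base groups with the inclusion $Y^* \hookrightarrow Y$ realizes $X \Wrr Y^*$ as a subgroup of $X \Wrr Y$, and hence $X \Wrr Y^* \in {\sf S}(\X \Wrr \Y) \subseteq \var{\X \Wrr \Y}$.

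For the second assertion, suppose $X$ is abelian and $Y^* \cong Y/N$ for some $Y \in \Y$ and $N \normal Y$. I would construct an epimorphism $X \Wrr Y \twoheadrightarrow X \Wrr Y^*$ by combining the natural quotient $p : Y \twoheadrightarrow Y^*$ with a $Y$-equivariant surjection $\sigma : X^{(Y)} \to X^{(Y^*)}$ on base groups. The candidate is $\sigma(f)(\bar y) = \prod_{n \in N} f(yn)$, the product being taken in $X$ over the fibre $p^{-1}(\bar y) = yN$. Two observations make $\sigma$ well-defined: first, $f$ has finite support, so only finitely many factors differ from the identity; second, $X$ is abelian, so the product depends neither on the ordering of factors nor on the chosen representative of the coset $yN$. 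A short calculation confirms that $\sigma$ is a group homomorphism, intertwines the $Y$-action with the induced $Y^*$-action via $p$, and is surjective (any preimage of $\bar f \in X^{(Y^*)}$ is produced by fixing a set-theoretic section of $p$ and transporting $\bar f$ along it). Hence $\sigma$ combined with $p$ yields the desired epimorphism, placing $X \Wrr Y^*$ in ${\sf Q}(\X \Wrr \Y) \subseteq \var{\X \Wrr \Y}$.

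The main obstacle lies in the second assertion: the abelianness of $X$ is genuinely essential, because the formula for $\sigma$ requires the product $\prod_{n \in N} f(yn)$ to be independent of the order of the factors. In the non-abelian case, no natural ordering of $N$ produces a $Y$-equivariant homomorphism, so the argument collapses. This explains both why the quotient case carries the extra hypothesis on $\X$ and why Part~1, by contrast, is unconditional.
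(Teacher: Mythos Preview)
Your proof is correct. The paper itself omits the proof of this lemma, referring instead to Propositions~22.11 and~22.13 of Hanna Neumann's monograph and to Lemmas~1.1--1.2 of the author's earlier work. Your argument is precisely the standard one found in those sources: the subgroup case is the obvious embedding of $X \Wrr Y^*$ into $X \Wrr Y$ via functions supported on $Y^*$, and the quotient case is the ``summing over cosets'' map $\sigma(f)(\bar y) = \prod_{n \in N} f(yn)$, whose well-definedness and equivariance rest exactly on the commutativity of $X$, as you correctly observe.
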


Since the Cartesian and direct wreath products of any groups generate the same variety of groups~\cite{HannaNeumann}, the analogs of both lemmas also hold for direct wreath products.

Recall that a group is said to be {\it critical} if it is finite, and it is not in the variety generated by all its proper factors~\cite{HannaNeumann}. We will need the following lemma, which is based on ideas from~\cite{Burns65, HannaNeumann}:

\begin{Lemma}
\label{Lemma structure critical}%%%%%%%%%%%%%%%%%%%%%%%%%%%%%%%%%%%%%%%%%%%%%
Let $A$ be a finite group of exponent $m$ and of nilpotency class $c$, and let $B$ be any finite abelian group of exponent $n$ coprime to $m$. Then any non-abelian critical group in the product variety $\W = \var{A}\var{B}$ is an extension of a group from $\var{A}$ by means of an at most $c$-generator group from $\var{B}$.
\end{Lemma}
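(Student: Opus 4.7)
The plan is to prove the lemma in two stages: first that $G$ splits as a semidirect product $N \rtimes H$ with $N \in \var{A}$ and $H \in \var{B}$ abelian, then that when $G$ is critical and non-abelian the complement $H$ is generated by at most $c$ elements.

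For the splitting, since $G \in \W = \var{A}\var{B}$ there is a normal subgroup $N \normal G$ with $N \in \var{A}$ and $G/N \in \var{B}$; one may in fact take $N$ to be the verbal (hence characteristic) subgroup of $G$ associated to $\var{B}$. Because $\var{A} \sub \B_m$ and $\var{B} \sub \A_n$ with $\gcd(m, n) = 1$, the orders of $N$ and of $G/N$ are coprime, so the Schur--Zassenhaus theorem yields $G = N \rtimes H$ with $H \cong G/N$ a finite abelian group of exponent dividing $n$.

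For the generator bound, suppose toward a contradiction that $G$ is non-abelian and critical but that the minimal number of generators of $H$ is some $d > c$. Fix a minimal generating set $h_1, \ldots, h_d$ of $H$ and form the proper subgroups
$$K_i \;=\; N \rtimes \langle h_1, \ldots, \hat h_i, \ldots, h_d \rangle, \qquad i = 1, \ldots, d.$$
Each $K_i$ is proper in $G$ by minimality of $d$, so it suffices to derive a contradiction by proving $G \in \var{K_1, \ldots, K_d}$. The strategy here follows the critical-group analysis of~\cite[Section~3]{Burns65}: since $N$ is nilpotent of class $c$ and $H$ is abelian, the commutator calculus on $G$, via Hall's collection process together with the multilinearity of commutators in their abelian entries modulo higher terms, allows one to rewrite an arbitrary word on $G$ as a product of basic commutators each of which involves at most $c$ of the generators $h_i$ and hence lies in some $K_i$. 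Consequently every law common to all the $K_i$ already holds in $G$, and by Birkhoff's theorem $G \in \var{K_1, \ldots, K_d}$, the desired contradiction.

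The principal obstacle is the last step: making the collection argument rigorous requires an induction on commutator weight together with a careful accounting of the residual terms produced by the basic commutator identities, verifying that each is absorbed into some $K_i$. This is precisely the content of the critical-group machinery in~\cite{Burns65}, and adapting it from the model case $\Ni_{c,m}\A_n$ to the abstract setting of $\var{A}\,\var{B}$ is the delicate technical heart of the proof.
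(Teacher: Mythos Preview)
Your first stage (the Schur--Zassenhaus splitting $G=N\rtimes H$) is correct and agrees with the paper.  The second stage, however, contains a genuine gap.

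The inference ``each basic commutator involves at most $c$ of the $h_i$ and hence lies in some $K_i$, so every law of the $K_i$ holds in $G$'' is not valid as stated.  First, nilpotency of $N$ of class $c$ does \emph{not} force long commutators in $G$ to die: $G$ need not be nilpotent at all (already $S_3=C_3\rtimes C_2$ shows this), so Hall collection on $G$ does not terminate at weight~$c$, and there is no reason a typical collected commutator should involve only $c$ of the $h_i$.  Second, even granting that every element of $G$ is a product of elements each lying in some $K_i$, this does \emph{not} imply $G\in\var{K_1,\ldots,K_d}$; membership in a variety is about laws, not about factorisations of elements.  You implicitly acknowledge the hole by calling the adaptation of Burns's machinery ``the delicate technical heart,'' but you have not actually supplied that heart.

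The paper proceeds quite differently and avoids commutator collection entirely.  It first uses criticality to force the Fitting subgroup $F$ to be a $p$-group and to coincide with $N$, then passes to the Frattini quotient $F/\Phi$, viewed as an $\F_p$-vector space on which the abelian complement $T$ acts.  Maschke's theorem decomposes $F/\Phi$ into $s$ irreducible $T$-submodules; faithfulness of the action (centralizer of the Fitting subgroup lies inside it) forces $T$ to embed in a product of $s$ cyclic groups.  The bound $s\le c$ then comes directly from the Oates--Powell corollary~\cite[51.38]{HannaNeumann}, which says exactly that in a critical group with nilpotent normal subgroup $F$ and complement $T$, any minimal $T$-invariant generating family of normal subgroups of $F$ has size at most the class of $F$.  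This representation-theoretic route plus the cited Oates--Powell result is what replaces, and rigorously accomplishes, the commutator-collection argument you were gesturing at.
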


\begin{proof}
Let $K$ be any of non-abelian critical groups in $\W$. Denote by ${\rm F} = Fit(K)$ the Fitting subgroup of $K$, that is, the unique maximal nilpotent normal subgroup of $K$. Since ${\rm F}$ is finite, it is a direct product of its Sylow subgroups, which are characteristic in ${\rm F}$ and thus normal in $K$. 
If  ${\rm F}$ had more than one such Sylow subgroups, they would intersect trivially, and
 $K$ would be embeddable into the direct product of its factor groups by these Sylow subgroups. Since $K$ is critical, it has one Sylow subgroup only: ${\rm F}$ is a $p$-group. 

The centralizer of a Fitting subgroup in each soluble group is contained in the Fitting subgroup~\cite[Theorem 1.3, Chapter 6]{Gorenstein}.

Denote by $\Phi = Frat(K)$ the Frattini subgroup of $K$: the set of non-generators of $K$, or the intersection of all maximal subgroups of $K$. The Frattini subgroup of a finite group is nilpotent and, since it also is normal (in fact also characteristic), $\Phi$ is a subgroup of ${\rm F}$. By ~\cite[Lemma 52.42]{HannaNeumann} (see also~\cite[Theorem 5.2.15 (ii)]{Robinson}) the Fitting subgroup $Fit(K/{\Phi})$ is equal to ${\rm F} / \Phi$. Since $K$ is finite, the factor ${\rm F} / \Phi$ is a direct products of some finitely many copies of a finite cycle $C_p$ by the result we just cited. Denoting their generators by $z_1, \ldots, z_l$ we get a presentation of ${\rm F} / \Phi$ as a vector space over the field $\F_p$:
$$
{\rm F} / \Phi \cong \Phi z_1 \oplus \cdots \oplus \Phi z_l.
$$
Assume $K$ is an extension of a group $L \in \var{A}$ by the group $T \in \var{B}$. 
Since $L$ is nilpotent and normal, $L \le {\rm F}$ holds.
On the other hand, since ${\rm F}/L$ is a subgroup of $K/L \in \var{B}$, the exponent of ${\rm F}/L$ has to divide $n = \exp{B}$. Since ${\rm F}$ is a $p$-group with $p$ coprime to $n$, we have $F = L$.
Since $m$ and $n$ are coprime, by Schur-Zessenhaus Theorem $K$ contains a compliment of $F$  isomorphic to $T$. To keep notations simple, denote that compliment by $T$.

Actions of elements of $T$ on ${\rm F} / \Phi$ by conjugations define a linear representation of degree $l$ on the space ${\rm F} / \Phi $. By Maschke's Theorem the latter is a sum of some irreducible subspaces $\Phi Z_i$:
$$
{\rm F} / \Phi \cong \Phi Z_1  \oplus \cdots \oplus  \Phi Z_s, \quad s \le l,
$$
which defines $s$ irreducible linear representations of $T$, if we restrict actions of $T$ upon $\Phi Z_i$. Denoting by $D_i$, $i=1,\ldots, s$, the kernels of these representations we get faithful representations for each of  $s$ factor-groups $T / D_i$.  Their intersection $D = \bigcap_{i=1}^s D_i$ is trivial because a non-trivial element from $D$ would centralize ${\rm F} / \Phi$, whereas ${\rm F} / \Phi = Fit(K/{\Phi})$, and an element outside the Fitting subgroup cannot centralize it in a soluble group, as we mentioned above. 

An abelian group with faithful representation is cyclic. This means that the group $T \cong \Phi  T / \Phi$ is embeddable into the direct product of at most $s$ finite cycles, and the number of generators of $T$ is restricted by $s$.

\vskip1mm
Now we can use Corollary~\cite[51.38]{HannaNeumann} of an important theorem of S.~Oates and M.B.~Powell~\cite{Oates Powell} (this theorem is mentioned in~\cite{HannaNeumann} as Theorem 51.37). 
Our critical group $K$ possesses a normal nilpotent subgroup ${\rm F}$ with a compliment $T$, and ${\rm F}$ has normal subgroups $\Phi Z_1, \ldots , \Phi Z_s$ such that 
{\parskip0mm

(i) each $\Phi Z_i$ is closed under conjugations of $T$;

(ii) $K = \langle \Phi Z_1, \ldots , \Phi Z_s, \, T \rangle$ holds;

(iii) no proper subset of the set $\{\Phi Z_1, \ldots , \Phi Z_s \}$ together with $T$ generates $K$.
}
Then by~\cite[51.38]{HannaNeumann} the number $s$ is less than or equal to the nilpotency class of ${\rm F}$. So the number of generators of $T$ also is not greater  than $c$.
\end{proof}

Now we can prove the sufficiency of the condition of Theorem~\ref{Theorem wr finite}:

\begin{proof}[Proof of Theorem~\ref{Theorem wr finite}, sufficiency]
Under conditions of the theorem both varieties $\var{A}$ and $\var{B}$ are locally finite, so by  theorem of O.Yu.~Schmidt~\cite{HannaNeumann} the product $\W = \var{A} \var{B}$ also is locally finite. By~\cite[Proposition 51.41]{HannaNeumann} $\W$ is generated by its critical groups.

Take $K\in \W$ to be any of such critical groups. If $K$ is abelian, then it is a cyclic $p$-group for some prime $p$~\cite[Propositon 51.36]{HannaNeumann}. If $p$ is a divisor of $m$, then $K= C_p \cong C_p \Wrr \1$, and if $p$ is a divisor of $n$, then $K= C_p \cong \1 \Wrr C_p$. In both cases $K$ belongs to $\W$ by Lemma~\ref{X*WrY_belongs_var} or by  Lemma~\ref{XWrY*_belongs_var} for $\X =\{ A \}$ and $\Y =\{ B \}$.

Let $K$ be non-abelian, and assume it is the extension of the group $L \in \var{A}$ by the group $T \in \var{B}$. By Lemma~\ref{Lemma structure critical} we may assume $L$ to be an at most $c$-generator group. Thus it is a subgroup of the direct power $C_n^c$ and by requirement of the theorem it is contained in $B$.  On the other hand, $L$ can be obtained from $A$ by means of operations ${\sf Q,S,C}$. Applying Lemma~\ref{X*WrY_belongs_var} and Lemma~\ref{XWrY*_belongs_var}  for the same $\X$ and $\Y$ we get that $K\in \var{A \Wrr B}$, which completes the proof.
\end{proof}

%%%%%%%%%%%%%%%%%%%%%%%%%%%%%%%%%%%%%%%%%%%%%%%%%%%%%%%%%%%%%%%%%%%%%%
%%%%%%%%%%%%%%%%%%%%%%%%%%%%%%%%%%%%%%%%%%%%%%%%%%%%%%%%%%%%%%%%%%%%%%
%%%%%%%%%%%%%%%%%%%%%%%%%%%%%%%%%%%%%%%%%%%%%%%%%%%%%%%%%%%%%%%%%%%%%%
\section{Necessity of the condition of Theorem~\ref{Theorem wr finite}}  
\label{Necessity of the condition}%%%%%%%%%%%%%%%%%%%%%%%%%%%%%%%%%%%%%%%%%%%%%%%%%%%%%%%%%%%

Below we without any definitions use the concepts of verbal products and verbal wreath products. The first can be found in the papers of S.~Moran~\cite{Moran 1}-\cite{Moran 3}, and for the second one can check the papers of A.L.~Shmel'kin~\cite{ShmelkinOnCrossVarieties} or R.~Burns~\cite{Burns65}. Both concepts also are presented in~\cite{HannaNeumann}.

Take a group $A$ of nilpotency class $c$ and of exponent $m$, and denote for briefness $\L = \var{A}$. It is clear that  $\L$ is a subvariety of the variety $\mathfrak N_{c,m}$ of all groups of nilpotency class at most $c$ and of exponent dividing $m$. Fix any prime $p$ not dividing $m$.
We need a specific group similar to the group $W_c$ used in~\cite{Burns65}, but in our case it is a slightly different group. Define the $\L$-verbal wreath product:
$$
W(A,p)=F_c(\L) \Wrr_{\!\! \L} \, C_p^c
$$
of the free group $F_c(\L)$ of rank $c$ in the variety $\L$ and of the direct product of $c$ copies of the cycle $C_p$, where $c$ is the nilpotency class of $A$. In the case, when $A$ generates the variety $\mathfrak N_{c,m}$, the group $W(A,p)$ is the group $W_c$ of~\cite{Burns65} for prime $n = p$. Most of the steps of the construction below are similar to the steps of~\cite{Burns65} or of~\cite{ShmelkinOnCrossVarieties}. 

The base subgroup of $W(A,p)$ is the $\L$-verbal product 
\begin{equation}
\label{the base}
\prod_{c \in C_p^c}^{\hskip5mm \L} F_c(\L) 
\cong \prod_{c \in C_p^c}^{\hskip5mm \L} \Big( \!\!\!\!\!\! \prod_{\hskip4mm i = 1,\ldots , c}^{\hskip5mm \L} \!\!\!\! C_m \Big)
\cong \!\!\! \!\!\! 
\prod_{\hskip2mm i = 1,\ldots , c  p^{c}}^{\hskip5mm \L} \!\!\! \!\!  C_m 
\end{equation}
(we used the associativity of verbal products). Present $F_c(\L)$ as the factor group $F_c/L(F_c)$ of the absolutely free group $F_c = F_c(x_1, \ldots, x_c)$ by the verbal subgroup $L(F_c)$.
If in the verbal product \eqref{the base} for each element $b \in C_p^c$ we denote by $x_i(b)$ the $b$'th copy of $x_i$, we can interpret \eqref{the base} as a factor group of the absolutely free group $F_{c  p^{c}}$ with $c  p^{c}$ generators $\{ x_i(b) \, | \, i=1,\ldots, c; \,\, b \in C_p^c \}$ by the verbal subgroup $L (F_{c  p^{c}})$.
For briefness of later use denote $L (F_{c  p^{c}})$ by $L'$.

In $F_c$ there is such a basic commutator $\gamma(x_1, \ldots, x_c)$ of weight $c$ that $\gamma\big(F_c(\L)\big)$ is non-trivial because otherwise $F_c(\L)$ would be a group not of class $c$ but of class $c-1$.
Consider the set   
$\Gamma = \{\gamma\big(x_1(b_1), \ldots, x_c(b_c)\big) \, | \, b_i \in C_p^c \}$ of $s = p^{c^2}$ elements  and denote them by 
$
\gamma_1, \ldots , \gamma_{s}
$.
Since the class of $F_{c  p^{c}}(\L) \cong F_{c  p^{c}}/ L'$ also is $c$, any two elements $\gamma_i$ and $\gamma_j$ (together with cyclic groups they generate) commute modulo $L'$.

To show that  the set $\Gamma$ modulo $L'$ generates in $F_{c  p^{c}}$ the direct product of cycles
\begin{equation}
\label{cycles}
\langle  L' \gamma_1 \rangle \times  \cdots \times \langle  L' \gamma_{s} \rangle 
\end{equation}
one need apply~\cite[Lemma 5.4.1]{Burns65} or just take any coset $ L' \gamma_{i_1}\cdots \gamma_{i_t}$, and for any of its factors $\gamma_{i_j}$ apply to $F_{c \cdot p^{c}}$ the homomorphism, which does not move the variables participating in $\gamma_{i_j}$, and sends all other variables to $1$. Thus each $\langle  L' \gamma_i \rangle$ intersects with the product of all other factors trivially.

Since the word $\gamma$ was applied on free generators, all the summands in \eqref{cycles} are cycles of the same non-trivial order $m'$. The latter divides $m$, and if we take any prime divisor $q$ of $m'$ and denote
$
v_i = \gamma_i^{m'/q}, 
$
we will get cycles $\langle L' v_i \rangle$  of order $q$ for all $i=1,\ldots, s$. So \eqref{cycles} contains an $s$-dimensional vector space over the field  $\F_q$:
\begin{equation}
\label{space}
V= \F_q^{s} \cong
\langle L' v_1 \rangle \times  \cdots \times \langle L' v_{s} \rangle 
\end{equation}
% This ``large'' dimension of $V$ is what will be used later.

With these constructions we have the following analog of Lemma 5.4 in~\cite{Burns65}:

\begin{Lemma}
\label{Lemma intersection}%%%%%%%%%%%%%%%%%%%%%%%%%%%%%%%%%%%%%%%%%%%%%
Fix any group $A$ of nilpotency class $c$ and of exponent $m$, and denote $\L = \var{A}$. Then for any prime $p$ not dividing $m$ every non-empty set of normal subgroups of the verbal wreath product 
$
W(A,p)=F_c(\L) \Wrr_{\!\! \L} \, C_p^c
$, 
such that none of those normal subgroups is wholly contained in the base subgroup, has non-trivial intersection. 
\end{Lemma}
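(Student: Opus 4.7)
The strategy mirrors that of Lemma 5.4 in \cite{Burns65}: it suffices to exhibit a single non-trivial element $w \in W(A,p)$ lying in every normal subgroup $N$ of $W(A,p)$ not contained in the base subgroup (which I denote by $B$), for then any non-empty family of such normals has intersection containing $w$.

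Given such an $N$, I fix $n = ft \in N$ with $f \in B$ and $t \ne 1$ in $G = C_p^c$. The key observation is that $V \subseteq \gamma_c(B) \subseteq Z(B)$, so for every $v \in V$ the commutator $[v, n]$ reduces to $[v, t] = (t - 1)\, v$ in additive $\F_q[G]$-module notation, giving $(t - 1) V \subseteq N \cap V$; moreover $N \cap V$ is an $\F_q[G]$-submodule of $V$ by normality of $N$. More generally, for each $j = 1, \ldots, c$ and each $b_j \in G$, the commutator $z_j = [x_j(b_j), n] \in N$ has image $(t - 1)\, \bar x_j(b_j)$ in $B/[B,B]$, and the iterated basic commutator $\gamma(z_1, \ldots, z_c) \in N \cap \gamma_c(B)$ may, using the multi-linearity of $\gamma$ modulo $\gamma_{c+1}(B) = 1$, be evaluated as the specific $\F_q$-linear combination
\[
\sum_{\vec{\epsilon} \in \{0,1\}^c} (-1)^{c - |\vec{\epsilon}|}\, \gamma\bigl(\bar x_1(t^{\epsilon_1} b_1), \ldots, \bar x_c(t^{\epsilon_c} b_c)\bigr)
\]
of basic commutators; after raising to the $(m'/q)$-th power this delivers an element of $V \cap N$.

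The hard part will be producing a $w$ independent of the choice of $n$, in particular independent of the top $t$. To resolve this I would also use $n^p \in N \cap B$ (valid since $t$ has order $p$), whose image in $B/[B,B]$ is the norm $N_t \bar f = (1 + t + \cdots + t^{p-1})\, \bar f$, together with the augmentation-ideal piece $(t - 1)\, B/[B,B]$ from the previous step. Since $p$ is invertible in $\F_q$, the orthogonal decomposition $\F_q[\langle t \rangle] = \F_q \cdot N_t \oplus I_{\langle t \rangle}$ (with $I_{\langle t \rangle}$ the augmentation ideal) gives access to both trivial- and non-trivial-character parts of $\bar f$ along $\langle t \rangle$. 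Combining these contributions through the multi-linear $\gamma$ and varying the $b_j$'s symmetrically should then produce a fixed non-zero element $w \in V$ belonging to every such $N$, completing the proof.
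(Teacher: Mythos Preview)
Your proposal correctly sets up the commutator machinery and produces, for each normal subgroup $N$ with a fixed $n=ft$, explicit nonzero elements of $N\cap V$ depending on $t$. But the decisive step---obtaining a \emph{single} $w\in V$ that lies in $N$ for \emph{every} choice of $t\in C_p^c\setminus\{1\}$---is not carried out. You acknowledge this is ``the hard part'' and then offer only a sketch: invoke $n^p$, use the idempotent decomposition $\F_q[\langle t\rangle]=\F_q\cdot N_t\oplus I_{\langle t\rangle}$, and combine ``symmetrically'' through $\gamma$. None of this is made precise, and it is not clear how the pieces assembled from one $t$ would match those from a different $t'$. Note also that the contribution from $n^p$ lives in $B/[B,B]$ and depends on $f$, not just on $t$; you have not explained how to eliminate this dependence either. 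As written, the argument stops exactly where the real difficulty begins.

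The paper takes a different and cleaner route that sidesteps any explicit construction of $w$. First, using that $\gcd(m,p)=1$ and a Sylow/Schur--Zassenhaus argument, every normal $N\not\subseteq B$ actually contains some order-$p$ cyclic subgroup $U\le C_p^c$, hence contains the normal closure $U^{W(A,p)}$. So it suffices to find a nonzero element of $V$ lying in $\bigcap_U U^{W(A,p)}$ over the $(p^c-1)/(p-1)$ cycles $U$. Burns' description of such normal closures (his Corollary~4.2) translates membership of $L'v_1^{x_1}\cdots v_s^{x_s}$ in $U^{W(A,p)}$ into a system of $|T|^c=p^{(c-1)c}$ homogeneous linear equations over $\F_q$ in the $s=p^{c^2}$ unknowns $x_i$. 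Over all $U$ this gives at most $\frac{p^c-1}{p-1}\cdot p^{(c-1)c}<p^{c^2}$ equations, so a nonzero solution exists by dimension count. Thus the paper's proof is a counting argument rather than a construction; your commutator calculations are correct as far as they go, but to finish along your lines you would still have to solve essentially the same simultaneous-constraint problem, and the dimension count is the natural way to do it.
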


\begin{proof}
%[Scheme of the proof]
A product $(L' v_1^{x_1}) \cdots (L' v_s^{x_s}) = L' v_1^{x_1} \cdots  v_s^{x_s}$ with values $x_i$ inside the multiplicative group $W(A,p)$ is nothing else but the linear combination 
$
x_1 \cdot L' v_1 + \cdots + x_s \cdot L' v_s
$     
of vectors $L' v_i$ with scalars $x_i \in \Z_q$ in the additively written space $V$. This linear interpretation allows to find the non-trivial element in the intersection mentioned in the lemma.

We need the description of normal closure of subgroups of the active group inside the verbal wreath product found by R.~Burns in~\cite[Corollary 4.2]{Burns65}. Applying it to the group
$W(A,p)$ we get that for any non-trivial subgroup $U$ of $C_p^c$ (with transversal $T$ in $C_p^c$) the normal closure $U^{W(A,p)}$ of $U$ in the whole group is the product $UM$, where 
\begin{equation}
\label{the closure}
\begin{array}{ll} 
 \,\,\,\,\,\,\, 
M   & \!\!\! =
\Big\{
\alpha_1^{u_1} \cdots \alpha_r^{u_r} \, | \,
r \ge 1; \,\,
u_i \in U; \,\,
\alpha_i \in \prod_{t \in T}^{\hskip0mm \L} \big(F_c(\L)\big)(t), \, i=1,\ldots, r ; \\ 
  & 
\hskip12mm
u_j \not= u_{j+1}, \,\,\, j=1,\ldots, r-1 ; \,\,\,\,\,\,\, \alpha_1  \cdots \alpha_r =1 \Big\}
\end{array}
\end{equation}
(unlike the previous notation above, $\alpha_i^{u_i}$ means not the power of $\alpha_i$ but the shifting action of $u_i$ on $\alpha_i$ in verbal wreath product).

Let $U$ be a cycle of order $p$ in $C_p^c$. 
It is easy to bring a product $L' v_1^{x_1} \cdots v_s^{x_s}$ to the shape mentioned in \eqref{the closure}. Namely, consider a word 
$
\gamma(x_1(b_1), \ldots, x_c(b_c))
$
participating in this product (for some selection of values $b_1, \ldots, b_c \in C_p^c$). 
For each $x_i(b_i)$ (or its power) participating in $\gamma$ find the representative $t_i$ of the coset $U b_i$ (such that $b_i =  t_i u_i^{-1}$ for some $u_i \in U$), and replace $x_i(b_i)$ by $x_i^{u_i}(t_i)$. So $\gamma$ will be presented as a product of elements $\alpha_i$ mentioned in \eqref{the closure}, each shifted by some elements $u_i \in U$. If a few neighbor elements use the same $u_i$, we can merge them to one $\alpha_i$ to have the condition $u_j \not= u_{j+1}$, as well.

After these transformations distinct elements $L' v_1^{x_1} \cdots v_s^{x_s}$, of course, may merge. If so, then  using the additive notation in $V$ collect the identified vectors $L' v_i^{x_i}$. By linear independence of vectors $L' v_i$ the condition $\alpha_1  \cdots \alpha_r =1$ of \eqref{the closure} will just mean that the integers $x_i$ form a solution $(x_1, \ldots, x_c)$ for a system of $|T|^c = p^{(c-1)c}$ homogeneous linear equations over $\F_q$.
Notice that during our manipulations we never change the $i$ index of $x_i(b_j)$ inside any $\gamma$. We just shift the $b_j$, so none of the non-zero words $\gamma$ will be mapped to zero.

$C_p^c$ contains $(p^c-1)/(p-1)$ cycles of order $p$. Each normal subgroup of $W(A,p)$, not wholly contained in the base subgroup, contains one of the cycles mentioned. And an element belongs to all of the normal subgroups if and only if the combined system of all the $(p^c-1)/(p-1) \times p^{(c-1)c}$ linear equations has a solution over $\F_q$. Recall that the equations are on $s = p^{c^2}$ variables which is larger then the number of equations.  So this system of homogeneous linear equations will always have a non-zero solution.
\end{proof}

The completed lemma allows to prove the remaining part of Theorem~\ref{Theorem wr finite}:

\begin{proof}[Proof of Theorem~\ref{Theorem wr finite}, necessity]
If the given finite groups $A$ and $B$ do not meet the condition of the theorem of Shmel'kin, then $\var{A} \var{B}$ cannot be generated by a finite group, so it is not equal to $\var{A \Wrr B}$ also. Thus we can restrict ourselves by the remaining case only: 
$A$ is nilpotent, $B$ is abelian, both groups are finite, the exponents  $m = \exp A$ and $n = \exp B$ are coprime, but the number of copies of the cycle $C_n$ in any direct decomposition of $B$ is less than $c$.

Take any prime divisor $p$ of $n$ and consider the group 
$
W(A,p)=F_c(\L) \Wrr_{\!\! \L} \, C_p^c = F_c(\L) \Wrr_{\!\! \L} \, F_c(\A_p).
$
Since 
$$
W(A,p) \in \L \A_p \subseteq \L \A_n = \var{A} \var{B},
$$ 
it will be enough, if we suppose that $\var{A} \var{B}$ is equal to $\var{A \Wrr B}$, and then arrive to a contradiction by showing that  $W(A,p)\notin \var{A \Wrr B}$ for a specific $p$.

According to a lemma of A.L.~Shmel'kin (see~\cite[Lemma 6.1]{ShmelkinOnCrossVarieties}), if the exponents of arbitrary locally finite varieties $\U$ and $\V$ are coprime, then the $\U\V$-free group $F_{2r}(\U\V)$ of rank $2r$ of the product variety $\U\V$ contains the $\U$-verbal wreath product 
$
F_j(\U) \Wrr_{\!\! \U} \, F_r(\V)
$
for any $j \ge r$ (see also~\cite[Lemma 5.2]{Burns65}). Taking $\U = \L$, $\V = \A_p$ and $j = r = c$ we get that the relatively free group $F_{2c}(\L\A_p)$ contains the group $W(A,p)$.

By our assumption $F_{2c}(\L\A_p) \in \var{A \Wrr B}$, and we can apply~\cite[Theorem 15.4]{HannaNeumann}. Namely: $F_{2c}(\L\A_p)$ is embeddable into the Cartesian (in our case also direct, as the number of factors is finite) product 
$$
A \Wrr B^{\big((A \Wrr B) ^{2c}\big)}
$$
of $k = |(A \Wrr B) ^{2c}| = \big(|A|^{|B|} \cdot|B|\big) ^{2c}$ copies of the group $A \Wrr B$.

By the cited lemma of A.L.~Shmel'kin the group $W(A,p)$ also is embeddable into the direct product of $k$ copies of  $A \Wrr B$. So there are some normal subgroups $W_i \, \normal \, W(A,p)$, $i=1,\ldots, k$, with trivial intersection such that each factor $W(A,p) / W_i$ is isomorphic to some subgroup of $A \Wrr B$.

If none of the subgroups $W_i$ were wholly contained in the base subgroup of $W(A,p)$, then their intersection would be non-trivial by Lemma~\ref{Lemma intersection}. Therefore for some $i^*$ the subgroup $W_{i^*}$ is inside the base subgroup of $W(A,p)$. Thus the factor-group $W(A,p) / W_{i^*}$ contains a copy of the subgroup $C_n^c$.

Among the prime divisors of $n$ there is a $p$, such that $B$ does not contain the direct power $C_p^c$, for otherwise $B$ would also contain the direct product $C_n^c$ of all such $C_p^c$ over all $p$ dividing $n$. For this fixed $p$ the factor-group $W(A,p) / W_{i^*}$ contains a copy of $C_p^c$. By the construction above that copy is contained in $A \Wrr B$ also.
On the other hand the $p$-primary component $B(p)$ of the abelian group $B$ is the Sylow $p$-subgroup of $A \Wrr B$, as it is easy to see by comparing the group orders.

Thus, if $A \Wrr B$ contained a copy of $C_p^c$, that copy would be inside a conjugated isomorphic copy $B(p)^b$ of $B(p)$, which is impossible by the choice of $p$.
\end{proof}

%%%%%%%%%%%%%%%%%%%%%%%%%%%%%%%%%%%%%%%%%%%%%%%%%%%%%%%%%%%%%%%%%%%%%%
%%%%%%%%%%%%%%%%%%%%%%%%%%%%%%%%%%%%%%%%%%%%%%%%%%%%%%%%%%%%%%%%%%%%%%
%%%%%%%%%%%%%%%%%%%%%%%%%%%%%%%%%%%%%%%%%%%%%%%%%%%%%%%%%%%%%%%%%%%%%%
\section{Some examples and applications}  
\label{examples applications}%%%%%%%%%%%%%%%%%%%%%%%%%%%%%%%%%%%%%%%%%%%%%%%%%%%%%%%%%%%

\begin{Example}
\label{smallExample}
Recall that we above denoted $\Ni_{c,m} = \Ni_c \cap \B_m$. In~\cite{wreath products algebra i logika}, using the properties of critical groups from~\cite{Burns65} and Proposition 2 from~\cite{wreath products algebra i logika}, we saw for the group $A=F_2(\Ni_{2,3})$, that
\begin{equation}
\label{N_23a}
\var{A \Wrr C_2} \not= \var{A} \var {C_2} =\Ni_{2,3}  \A_2,
\end{equation}
\begin{equation}
\label{N_23b}
\var{A \Wrr (C_2 \oplus C_2)} = \var{A} \var{C_2 \oplus C_2} =\Ni_{2,3}  \A_2.
\end{equation}
As we mentioned in Introduction, using the construction of critical groups in~\cite[Section 3]{Burns65} one could build analogs of \eqref{N_23a} and \eqref{N_23b} for any variety $\Ni_{2,p}$ and $C_q$, where prime numbers $p$ and $q$ are chosen so that $q$ divides $p-1$. However, much more general cases can be covered by Theorem~\ref{Theorem wr finite} of current paper: for any $A=F_r(\Ni_{c,m})$ the equality 
$\var{A \Wrr C_n^r} = \var{A} \A_n$
holds if and only if $r$ is greater than or equal to the nilpotency class of $A$. And if $r \ge c$, the class of $A$ is $c$ because every nilpotent variety of class $c$ can be generated by its free group of rank $c$~\cite[Corollary 35.12]{HannaNeumann}. So in that case we additionally have 
$\var{A \Wrr C_n^r} = \Ni_{c,m} \A_n.$
\end{Example}

\begin{Example}
An example of a finite nilpotent group of class $2$ is the dihedral group $D_4$ of order $8$. 
L.G.~Kov{\'a}cs in \cite{Kovacs dihedral} has computed the variety it generates: $\var {D_4}= \A_2^2 \cap \Ni_2$. So by Theorem~\ref{Theorem wr finite} for any odd $n$ we have $\var{D_4 \Wrr C_n} \subset (\A_2^2 \cap \Ni_2)\A_n$ and $\var{D_4 \Wrr (C_n \oplus C_n)} = (\A_2^2 \cap \Ni_2)\A_n$.
The quaternion group $Q_8$ of order eight generates the same variety as $D_4$ (see~\cite{HannaNeumann}), and it also is nilpotent of class $2$. So both the straight inclusion and the equality given above have analogs for $Q_8$.
\end{Example}

Our study of varieties generated by wreath products of groups started in our Ph.D. study, where  among other topics  a specific operation called $\circ$-product was considered. For the given variety $\V$ and the group $G$, the $\circ$-product $\V \circ G$ is defined as the variety generated by all possible extensions of all groups from $\V$ by the group $G$. Since for any group $N$ generating $\V$ and for any extension $E$ of $N$ by the group $G$ the $\circ$-product $\V \circ G$ contains $E$ and is contained in the product variety $\V \, \var{G}$,  the $\circ$-product  is a somewhat sharper tool (than the conventional product of varieties of groups) to study the extensions of groups. To use the potential of this operation we classified some cases, when $\V \circ G = \V \, \var{G}$ holds. 
In particular, by Theorem 5.6  in our Ph.D. thesis for any regular variety $\V$ and finite group $G$ the $\circ$-product $\V \circ G$ is equal to $\V \, \var{G}$ only if $G$ is abelian (a variety is called regular, if none of its relatively free groups can be embedded into a relatively free group of lower rank~\cite{HannaNeumann}). Theorem~\ref{Theorem wr finite} of the current work allows to get more details for a particular case:

\begin{Corollary}
Let $\V$ be any non-trivial variety generated by a finite group and let $G$ be any non-trivial finite group. Then the equality $\V \circ G = \V \, \var{G}$ holds if and only if 
the exponents of group $\V$ and $G$ are coprime,
$\V$ is a nilpotent variety, $G$ is an abelian group,  
and a direct decomposition of $G$ contains at least $c$ copies of the cycle $C_n$, where $c$ is the nilpotency class of $\V$, and $n$ is the exponent of $G$.
\end{Corollary}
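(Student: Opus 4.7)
The plan is to reduce the $\circ$-product $\V \circ G$ to a variety generated by a single wreath product and then invoke Theorem~\ref{Theorem wr finite}. By hypothesis, I may fix a finite group $N$ generating $\V$. Note that $\exp \V = \exp N$, and whenever $\V$ happens to be nilpotent the nilpotency class of $N$ equals that of $\V$ (a nilpotent variety of class $c$ cannot be generated by a group of strictly smaller class).

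The central step is to establish $\V \circ G = \var{N \Wrr G}$. The inclusion $\var{N \Wrr G} \subseteq \V \circ G$ is immediate, since $N \Wrr G$ is an extension of $N^{|G|} \in \V$ by $G$. For the reverse inclusion I would take any extension $E$ of some $M \in \V$ by $G$; the classical Kaloujnine--Krasner embedding theorem realises $E$ as a subgroup of $M \Wrr G$. Since $M \in \var{N} = {\sf QSC}\{N\}$ by Birkhoff's theorem, three successive applications of Lemma~\ref{X*WrY_belongs_var}, first for cartesian powers, then for subgroups, then for quotients, yield $M \Wrr G \in \var{N \Wrr G}$, whence $E \in \var{N \Wrr G}$. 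As $\V \circ G$ is by definition generated by all such extensions $E$, the reduction follows.

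Once the identity $\V \circ G = \var{N \Wrr G}$ is in hand, the hypothesis $\V \circ G = \V \, \var{G}$ becomes $\var{N \Wrr G} = \var{N} \, \var{G}$, and Theorem~\ref{Theorem wr finite} applied with $A = N$, $B = G$ supplies both directions of the equivalence. Its three criteria translate verbatim into those of the corollary, after noting that for a finite abelian group $G$ of exponent $n$ the containment $C_n^c \le G$ is equivalent to having at least $c$ cyclic direct summands of order $n$ in the invariant-factor decomposition of $G$ (each prime-primary component $G(p)$ with $p \mid n$ must contain $C_{p^{v_p(n)}}^c$, which is precisely the invariant-factor condition). I expect the only genuine obstacle to be the reduction $\V \circ G = \var{N \Wrr G}$; once that is established, the corollary follows immediately from Theorem~\ref{Theorem wr finite}.
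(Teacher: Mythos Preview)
Your reduction $\V \circ G = \var{N \Wrr G}$ is exactly the bridge the paper has in mind (the corollary is stated without proof, as an immediate consequence of Theorem~\ref{Theorem wr finite}), and your argument for it via Kaloujnine--Krasner together with iterated use of Lemma~\ref{X*WrY_belongs_var} is correct. Once that reduction is in place, Theorem~\ref{Theorem wr finite} applied to $A=N$, $B=G$ gives both directions verbatim, and your observations that $\exp N = \exp \V$ and that the nilpotency class of $N$ equals that of $\V$ are the only translations needed.
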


%\vskip3mm
When this manuscript was under preparation, we had an opportunity to discuss it with Prof.~A.Yu.~Ol'shanskii.  His very helpful comments allowed to improve details of the text.

%%% http://groupprops.subwiki.org/wiki/Group_of_nilpotency_class_two

\vskip3mm

\end{document}